\numberwithin{equation}{section}
\newtheorem{thm}{Theorem}[section]
\newtheorem{cor}[thm]{Corollary}
\newtheorem{lem}[thm]{Lemma}
\newtheorem{prop}[thm]{Proposition}
\newtheorem{ex}{Example}
\theoremstyle{definition}
\newtheorem{rem}{Remark}
\begin{document}

\title{Eigenvalue analysis of constrained minimization problem for homogeneous polynomial}%
 \thanks{Email: songyisheng1@gmail.com(Song); maqilq@polyu.edu.hk(Qi).}
 \thanks{The work was supported by the Hong Kong Research Grant Council (Grant No. PolyU 501808, 501909, 502510, 502111) and the first author was supported partly by the National Natural Science Foundation of P.R. China (Grant No. 11071279, 11171094, 11271112)  and by the Research Projects of Science and Technology Department of Henan Province(Grant No. 122300410414).}
 \maketitle
 \begin{center}{Yisheng Song$^{1,2}$ and Liqun Qi$^1$}\\\vskip 2mm
 1. Department of Applied Mathematics, The Hong Kong Polytechnic University, Hung Hom, Kowloon, Hong Kong \\\vskip 2mm
 2. College of Mathematics and Information Science,
Henan Normal University, XinXiang HeNan,  P.R. China, 453007.\end{center}
 %
\vskip 4mm
\begin{quote}{\bf Abstract.}\
In this paper, the concepts of Pareto $H$-eigenvalue and Pareto $Z$-eigenvalue are introduced for studying constrained minimization problem and the necessary and sufficient conditions of such eigenvalues are  given.  It is proved that a symmetric tensor has at least one Pareto $H$-eigenvalue (Pareto $Z$-eigenvalue). Furthermore, the minimum Pareto $H$-eigenvalue (or Pareto $Z$-eigenvalue) of a symmetric tensor is exactly equal to the minimum value of constrained minimization problem of homogeneous polynomial deduced by such a tensor, which gives an alternative methods for solving the minimum value of constrained minimization problem. In particular, a symmetric tensor $\mathcal{A}$ is copositive if and only if every Pareto $H$-eigenvalue ($Z-$eigenvalue) of $\mathcal{A}$ is  non-negative.\\
 {\bf Key Words and Phrases:} Constrained minimization, Principal sub-tensor, Pareto $H$-eigenvalue, Pareto $Z$-eigenvalue.\\
{\bf 2010 AMS Subject Classification:} 15A18, 15A69, 90C20, 90C30, 11E76.
\end{quote}

\pagestyle{fancy} \fancyhead{} \fancyhead[EC]{Yisheng Song and Liqun Qi}
\fancyhead[EL,OR]{\thepage} \fancyhead[OC]{Eigenvalue Analysis of constrained minimization problem} \fancyfoot{}

\section{\bf Introduction}\label{}
 Throughout this paper,  let $\mathbb{R}^n_{+}=\{x\in \mathbb{R}^n;x\geq0\}$, and $\mathbb{R}^n_{-}=\{x\in \mathbb{R}^n;x\leq0\}$, and $\mathbb{R}^n_{++}=\{x\in \mathbb{R}^n;x>0\}$, and $e=(1,1,\cdots,1)^T$, and $x^{[m]} = (x_1^m, x_2^m,\cdots, x_n^m)^T$ for $x = (x_1, x_2,\cdots, x_n)^T$, where $x^T$ is the transposition of a vector $x$ and $x\geq0$ ($x>0$) means $x_i\geq0$ ($x_i>0$) for all $i\in\{1,2,\cdots,n\}$.

As a natural extension of the concept of matrices, an $m$-order $n$-dimensional tensor $\mathcal{A}$ consists of $n^m$ elements in the real field $\mathbb{R}$:
$$\mathcal{A} = (a_{i_1\cdots i_m}),\ \ \ \ \  a_{i_1\cdots i_m} \in \mathbb{R},\ \   i_1,i_2,\cdots,i_m=1,2,\cdots, n.$$
For an element  $x = (x_1, x_2,\cdots, x_n)^T\in \mathbb{R}^n$ or $\mathbb{C}^n$,  $\mathcal{A}x^m$ is defined by \begin{equation}\label{eq:11}\mathcal{A}x^m=\sum_{i_1,i_2,\cdots,i_m=1}^na_{i_1i_2\cdots i_m}x_{i_1}x_{i_2}\cdots x_{i_m};\end{equation}
 $\mathcal{A}x^{m-1}$ is a vector in $\mathbb{R}^n$ (or $\mathbb{C}^n$) with its ith component defined by
\begin{equation}\label{eq:12}(\mathcal{A}x^{m-1})_i=\sum_{i_2,\cdots,i_m=1}^na_{ii_2\cdots i_m}x_{i_2}\cdots x_{i_m}\mbox{ for } i=1,2,\ldots,n.\end{equation}
An $m$-order $n$-dimensional tensor $\mathcal{A}$ is said to be {\em symmetric} if its entries $a_{i_1\cdots i_m}$ are invariant for any permutation of the indices. Clearly, each $m$-order $n$-dimensional symmetric tensor $\mathcal{A}$ defines a homogeneous polynomial $\mathcal{A}x^m$ of degree $m$ with $n$ variables and vice versa.

For given an $m$-order $n$-dimensional symmetric tensor $\mathcal{A}$, we consider a constrained optimization problem of the form:
\begin{equation}\label{eq:13} \begin{aligned}
    \min &\ \frac1m\mathcal{A}x^m\\
     s.t. &\ x^Tx^{[m-1]} = 1\\
     &\ x\in \mathbb{R}^n_+.
      \end{aligned}\end{equation}
Then the Lagrange function of the problem (\ref{eq:13}) is given clearly by
\begin{equation}\label{eq:14} L(x,\lambda,y)=\frac1m\mathcal{A}x^m+\frac1m\lambda(1-x^Tx^{[m-1]})-x^Ty\end{equation}
where $x,y\in \mathbb{R}^n_+$, $\frac{\lambda}m\in\mathbb{R}$ is the Lagrange multiplier of the equality constraint and $y$ is the Lagrange multiplier of non-negative constraint. So the solution $x$ of the problem (\ref{eq:13}) satisfies the following conditions:
\begin{align}
 \mathcal{A}x^{m-1}-\lambda x^{[m-1]}-y&=0\label{eq:15}\\
 1-x^Tx^{[m-1]} &= 0\label{eq:16}\\
x^Ty &= 0\label{eq:17}\\
x,y&\in \mathbb{R}^n_+.\label{eq:18}
\end{align}
The equation (\ref{eq:16}) means that $\sum\limits_{i=1}^nx_i^m = 1$. It follows from the equations (\ref{eq:15}), (\ref{eq:17}) and (\ref{eq:18}) that $$\begin{aligned}x^Ty=x^T\mathcal{A}x^{m-1}-\lambda x^Tx^{[m-1]}&=0\\ x\geq 0, \mathcal{A}x^{m-1}-\lambda x^{[m-1]}=y&\geq0,\end{aligned}$$ and hence,
\begin{equation}\label{eq:19}\begin{cases} \mathcal{A}x^m=\lambda x^Tx^{[m-1]}\\
\mathcal{A}x^{m-1}-\lambda x^{[m-1]}\geq0\\ x\geq 0.\end{cases}\end{equation}

Following Qi \cite{LQ1} ($H-$eigenvalue of the tensor $\mathcal{A}$) and Seeger \cite{S99} (Pareto eigenvalue of the matrix $A$), for  a $m$-order $n$-dimensional tensor $\mathcal{A}$, a real number $\lambda$ is called {\em Pareto $H-$eigenvalue} of the tensor $\mathcal{A}$ if there exists a non-zero vector $x\in \mathbb{R}^n$ satisfying the system (\ref{eq:19}). The non-zero vector $x$ is called a {\em Pareto $H-$eigenvector} of the tensor $\mathcal{A}$ associated to $\lambda$.

Similarly, for given an $m$-order $n$-dimensional symmetric tensor $\mathcal{A}$, we consider another constrained optimization problem of the form ($m\geq 2$):
\begin{equation}\label{eq:110} \begin{aligned}
    \min &\ \frac1m\mathcal{A}x^m\\
     s.t. &\ x^Tx = 1\\
     &\ x\in \mathbb{R}^n_+.
      \end{aligned}\end{equation}
Obviously, when $x\in \mathbb{R}^n$, $x^Tx = 1$ if and only if $(x^Tx)^{\frac{m}2} = 1$.
The corresponding Lagrange function may be written in the form
$$ L(x,\mu,y)=\frac1m\mathcal{A}x^m+\frac1m\mu(1-(x^Tx)^{\frac{m}2})-x^Ty.$$
 So the solution $x$ of the problem (\ref{eq:110})  satisfies the conditions:
$$ \mathcal{A}x^{m-1}-\mu(x^Tx)^{\frac{m}2-1} x-y=0,\
 1-(x^Tx)^{\frac{m}2}  = 0,\
x^Ty  = 0,\
x,y \in \mathbb{R}^n_+.
$$
Then we also have  $\sum\limits_{i=1}^nx_i^2 = 1$ and
\begin{equation}\label{eq:111}\begin{cases} \mathcal{A}x^m=\mu (x^Tx)^{\frac{m}2} \\
\mathcal{A}x^{m-1}-\mu (x^Tx)^{\frac{m}2-1}x \geq0\\ x\geq 0.\end{cases}\end{equation}
Following Qi \cite{LQ1} ($Z-$eigenvalue of the tensor $\mathcal{A}$) and Seeger \cite{S99} (Pareto eigenvalue of the matrix $A$), for  an $m$-order $n$-dimensional tensor $\mathcal{A}$, a real number $\mu$ is said to be {\em Pareto $Z-$eigenvalue} of the tensor $\mathcal{A}$ if there is a non-zero vector $x\in \mathbb{R}^n$ satisfying the system (\ref{eq:111}). The non-zero vector $x$ is called a {\em Pareto $Z-$eigenvector} of the tensor $\mathcal{A}$ associated to $\mu$.

So the constrained optimization problem (\ref{eq:13}) and (\ref{eq:110}) of homogeneous polynomial may be respectively solved by means of the Pareto $H$-eigenvalue (\ref{eq:19}) and Pareto $Z-$eigenvalue (\ref{eq:111}) of the corresponding tensor. It will be an interesting work to compute the Pareto $H$-eigenvalue ($Z-$eigenvalue) of a higher order tensor.\\

When $m=2$, both Pareto $H-$eigenvalue  and Pareto $Z-$eigenvalue of the $m$-order $n$-dimensional tensor obviously changes into  Pareto eigenvalue of the matrix. The concept of Pareto eigenvalue is first introduced and used by Seeger \cite{S99} for studying the equilibrium processes defined by linear complementarity conditions.  For more details, also see Hiriart-Urruty and Seeger \cite{HS10}.\\

 In this paper, we will study the properties of the Pareto $H$-eigenvalue ($Z-$eigenvalue) of a higher order tensor $\mathcal{A}$. It will be proved that a real number $\lambda$ is  Pareto $H$-eigenvalue ($Z-$eigenvalue) of  $\mathcal{A}$ if and only if $\lambda$ is $H^{++}$-eigenvalue ($Z^{++}$-eigenvalue) of some $|N|$-dimensional  principal sub-tensor of $\mathcal{A}$ with  corresponding $H-$eigenvector ($Z-$eigenvector) $w$  and
 $$\sum\limits_{i_2,\cdots ,i_m\in N}a_{ii_2\cdots i_m}w_{i_2}w_{i_3}\cdots w_{i_m}\geq0\mbox{ for }i\in \{1,2,\cdots,n\}\setminus N.$$
 So we may calculate some Pareto $H$-eigenvalue ($Z-$eigenvalue) of a higher order tensor by means of $H^{++}$-eigenvalue ($Z^{++}$-eigenvalue) of the lower dimensional tensors. What's more, we will show that \begin{equation}\label{eq:112}\min\limits_{x\geq0 \atop \|x\|_m=1 }\mathcal{A}x^m=\min\{\mu; \mu \mbox{ is Pareto $H$-eigenvalue of  }\mathcal{A}\}\end{equation}
\begin{equation}\label{eq:113}\min\limits_{x\geq0 \atop \|x\|_2=1 }\mathcal{A}x^m=\min\{\mu; \mu \mbox{ is Pareto $Z$-eigenvalue of  }\mathcal{A}\}.\end{equation} Therefore,  we may solve the constrained minimization problem for homogeneous polynomial and test the (strict) copositivity of a symmetric tensor $\mathcal{A}$ with the help of computing the Pareto $H$-eigenvalue (or Pareto $Z$-eigenvalue) of a symmetric tensor.
 As a corollary, a symmetric tensor $\mathcal{A}$ is copositive if and only if every Pareto $H$-eigenvalue ($Z-$eigenvalue) of $\mathcal{A}$ is  non-negative  and $\mathcal{A}$ is strictly copositive if and only if every Pareto $H$-eigenvalue ($Z-$eigenvalue) of $\mathcal{A}$ is   positive.

 \section{\bf Preliminaries and Basic facts}

 Let  $\mathcal{A}$ be  an $m$-order $n$-dimensional symmetric tensor. A number $\lambda\in \mathbb{C}$ is called an {\em eigenvalue of $\mathcal{A}$} if there exists a nonzero vector $x\in \mathbb{C}^n$  satisfying
\begin{equation}\label{eq:22}\mathcal{A}x^{m-1}=\lambda x^{[m-1]}, \end{equation}
where $x^{[m-1]}=(x_1^{m-1},\cdots , x_n^{m-1})^T$, and call $x$ an {\em eigenvector} of $\mathcal{A}$ associated with the eigenvalue $\lambda$. We call such an eigenvalue {\em $H$-eigenvalue} if it is real and has a real eigenvector $x$, and call such a real eigenvector $x$ an {\em H-eigenvector}.

These concepts were first introduced by Qi \cite{LQ1} to the higher order symmetric tensor, and the existence of the eigenvalues and its some application were studied also. Lim \cite{LL} independently introduced these concept and obtained the existence results using the variational approach. Qi \cite{LQ1, LQ2, LQ3} extended some nice properties of the matrices to the higher order tensors.  Subsequently,
this topics are attracted attention of many mathematicians
from different disciplines. For various studies and applications, see  Chang \cite{C09}, Chang, Pearson and Zhang \cite{CPT1},
Chang, Pearson and Zhang \cite{CPT},  Hu, Huang and Qi \cite{HHQ}, Hu and Qi \cite{HQ}, Ni, Qi, Wang and Wang \cite{NQWW}, Ng, Qi and Zhou \cite{NQZ},  Song and Qi \cite{SQ13,SQ10}, Yang and Yang \cite{YY10,YY11}, Zhang \cite{TZ}, Zhang and  Qi \cite{ZQ}, Zhang, Qi and Xu \cite{ZQX} and references cited therein.\\

A number $\mu\in \mathbb{C}$ is said to be an {\em $E$-eigenvalue of $\mathcal{A}$} if there exists a nonzero
vector $x\in \mathbb{C}^n$ such that
\begin{equation}\label{eq:23}\mathcal{A}x^{m-1}=\mu x (x^Tx)^{\frac{m-2}2}.\end{equation} Such a nonzero
vector $x\in \mathbb{C}^n$ is called an {\em $E$-eigenvector} of $\mathcal{A}$ associated with $\mu$,
 If $x$ is real, then $\mu$ is also real. In this case, $\mu$ and $x$ are called a {\em $Z$-eigenvalue} of  $\mathcal{A}$ and
a {\em $Z$-eigenvector} of  $\mathcal{A}$ (associated with $\mu$), respectively. Qi \cite{LQ1, LQ2, LQ3} first introduced and used these concepts  and showed that if $\mathcal{A}$ is regular, then  a complex number is an $E$-eigenvalue of higher order symmetric tensor if and only if it is a root of the corresponding $E$-characteristic polynomial. Also see Hu and Qi \cite{HQ13},  Hu,  Huang,  Ling and  Qi \cite{HHLQ}, Li, Qi and Zhang \cite{LQZ} for more details.  \\

In homogeneous
polynomial $\mathcal{A}x^m$ defined by (\ref{eq:11}), if we let some (but not all) $x_i$ be zero, then we have a homogeneous
polynomial with fewer variables, which defines a lower dimensional tensor. We call such a lower dimensional
 tensor a {\em principal sub-tensor} of $\mathcal{A}$. The concept were first introduced and used by Qi \cite{LQ1} to the higher order symmetric tensor.\\

Recently, Qi \cite{LQ4} introduced and used the following concepts for studying the properties of hypergraph. An $H$-eigenvalue $\lambda$ of $\mathcal{A}$ is said to be (i) {\em  $H^+$-eigenvalue of $\mathcal{A}$}, if its $H$-eigenvector $x\in \mathbb{R}^n_+$; (ii) {\em  $H^{++}$-eigenvalue of $\mathcal{A}$}, if its $H$-eigenvector $x\in \mathbb{R}^n_{++}$. Similarly, we introduce the concepts of $Z^+$-eigenvalue and $Z^{++}$-eigenvalue.  An $Z$-eigenvalue $\mu$ of $\mathcal{A}$ is said to be (i) {\em $Z^+$-eigenvalue of $\mathcal{A}$}, if its $Z$-eigenvector $x\in \mathbb{R}^n_+$; (ii) {\em  $Z^{++}$-eigenvalue of $\mathcal{A}$}, if its $Z$-eigenvector $x\in \mathbb{R}^n_{++}$.

\section{\bf Pareto $H$-eigenvalue and Pareto $Z$-eigenvalue}

Let $N$ be  a subset of the index
set $\{1, 2,  \cdots, n\}$ and $\mathcal{A}$ be a tensor of order $m$ and dimension $n$.  We denote the principal sub-tensor of   $\mathcal{A}$ by $\mathcal{A} ^N$ which is obtained by homogeneous polynomial $\mathcal{A}x^m$ for all $x=(x_1, x_2, \cdots, x_n)^T$ with $x_i=0$ for $i\in \{1, 2,  \cdots, n\}\setminus N$.   The symbol $|N|$ denotes the cardinality of $N$.  So, $\mathcal{A} ^N$ is a tensor of order $m$ and dimension $|N|$ and the principal sub-tensor $\mathcal{A}^N$  is just $\mathcal{A}$ itself when $N=\{1, 2,  \cdots, n\}$.

\begin{thm} \label{th:31} Let $\mathcal{A}$ be a  $m$-order and $n$-dimensional  tensor.
A real number $\lambda$ is  Pareto $H$-eigenvalue of  $\mathcal{A}$ if and only if there exists a nonempty subset $N\subseteq \{1, 2,  \cdots, n\}$ and a vector $w\in\mathbb{R}^{|N|}$
 such that
\begin{align}
 \mathcal{A}^{N}w^{m-1}&=\lambda w^{[m-1]}\label{eq:31},\  \  w\in\mathbb{R}^{|N|}_{++}\\
 \sum\limits_{i_2,\cdots ,i_m\in N}a_{ii_2\cdots i_m}w_{i_2}w_{i_3}\cdots w_{i_m}&\geq0\mbox{ for }i\in \{1,2,\cdots,n\}\setminus N\label{eq:32}
\end{align}
In such a case, the vector $y\in\mathbb{R}^{|N|}_+$ defined by
\begin{equation}\label{eq:33} y_i=\begin{cases} w_i, \ \  i\in N\\
0,\ \ i\in \{1,2,\cdots,n\}\setminus N\end{cases}\end{equation}
is a Pareto $H$-eigenvector of $\mathcal{A}$ associated to the real number $\lambda$.
 \end{thm}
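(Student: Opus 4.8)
The plan is to prove both implications by taking the support of a Pareto $H$-eigenvector as the index set $N$, observing that the principal sub-tensor $\mathcal{A}^{N}$ records exactly the restriction of the map $x\mapsto \mathcal{A}x^{m-1}$ to that support. The engine of the argument is a complementarity identity that forces the Pareto conditions \eqref{eq:19} to collapse to an honest $H^{++}$-eigenpair on the support.

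For the necessity direction I would start from a Pareto $H$-eigenvector $x\geq 0$, $x\neq 0$ satisfying \eqref{eq:19}, and set $N=\{i:x_i>0\}$, which is nonempty because $x$ is a nonzero non-negative vector; let $w\in\mathbb{R}^{|N|}$ be the restriction of $x$ to $N$, so that $w\in\mathbb{R}^{|N|}_{++}$. The crucial step is to extract a complementarity relation: writing $z=\mathcal{A}x^{m-1}-\lambda x^{[m-1]}\geq 0$ and pairing with $x$ gives $x^{T}z=\mathcal{A}x^{m}-\lambda x^{T}x^{[m-1]}=0$ by the first line of \eqref{eq:19}. Since $x\geq 0$ and $z\geq 0$, every summand $x_i z_i$ vanishes, hence $z_i=0$ for each $i\in N$; this says precisely that $(\mathcal{A}x^{m-1})_i=\lambda x_i^{m-1}$ for $i\in N$. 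Because $x_j=0$ off $N$, only the monomials whose indices $i_2,\dots,i_m$ all lie in $N$ survive in $(\mathcal{A}x^{m-1})_i$, and these reproduce $(\mathcal{A}^{N}w^{m-1})_i$, yielding \eqref{eq:31}. For $i\notin N$ the inequality $z_i\geq 0$ together with $x_i^{m-1}=0$ forces $(\mathcal{A}x^{m-1})_i\geq 0$, and the same truncation of the summation gives \eqref{eq:32}.

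For the sufficiency direction I would reverse the construction: given $N$ and $w\in\mathbb{R}^{|N|}_{++}$ satisfying \eqref{eq:31} and \eqref{eq:32}, define $y$ by \eqref{eq:33} and verify that it solves \eqref{eq:19}. Since $y$ vanishes off $N$, the identity $(\mathcal{A}y^{m-1})_i=\sum_{i_2,\dots,i_m\in N}a_{ii_2\cdots i_m}w_{i_2}\cdots w_{i_m}$ holds for every $i$; for $i\in N$ this equals $\lambda w_i^{m-1}=\lambda y_i^{m-1}$ by \eqref{eq:31}, while for $i\notin N$ it is non-negative by \eqref{eq:32} and $y_i^{m-1}=0$, so in either case $(\mathcal{A}y^{m-1}-\lambda y^{[m-1]})_i\geq 0$. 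Contracting once more with $y$ and using that only the indices $i\in N$ contribute gives $\mathcal{A}y^{m}=\sum_{i\in N}w_i\cdot\lambda w_i^{m-1}=\lambda\sum_{i}y_i^{m}=\lambda\, y^{T}y^{[m-1]}$, the remaining equation of \eqref{eq:19}. Hence $y\geq 0$ is a Pareto $H$-eigenvector for $\lambda$, which simultaneously proves that $\lambda$ is a Pareto $H$-eigenvalue and establishes the final assertion about $y$.

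The step I expect to be the main (though modest) obstacle is the bookkeeping in passing between $(\mathcal{A}x^{m-1})_i$ and $(\mathcal{A}^{N}w^{m-1})_i$: one must argue carefully that zeroing out the off-support coordinates annihilates exactly the monomials involving an index outside $N$, so that the surviving sum is literally the $i$-th component of the sub-tensor acting on $w$. Everything else reduces to the complementarity argument $x^{T}z=0$, which is what makes the necessity direction work and what forces the inequality constraint to hold with equality on the support.
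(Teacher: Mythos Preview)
Your proof is correct and follows essentially the same route as the paper: both directions hinge on the complementarity identity $x_i(\mathcal{A}x^{m-1}-\lambda x^{[m-1]})_i=0$ obtained from $x\geq 0$, $z\geq 0$, $x^Tz=0$, with $N$ taken to be the support of the eigenvector. Your treatment of the sufficiency direction and of the bookkeeping between $(\mathcal{A}x^{m-1})_i$ and $(\mathcal{A}^{N}w^{m-1})_i$ is in fact more explicit than the paper's, which dispatches the sufficiency in a single sentence.
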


\begin{proof} First we show the necessity.  Let the real number $\lambda$ be a Pareto $H$-eigenvalue of $\mathcal{A}$ with a corresponding Pareto $H$-eigenvector $y$. Then by the definition (\ref{eq:19}) of the Pareto $H$-eigenvalue, the Pareto $H$-eigenpairs $(\lambda,y)$  may be rewritten in the form
 \begin{equation}\label{eq:34}\begin{aligned} y^T(\mathcal{A}y^{m-1}-\lambda y^{[m-1]})=&0\\
\mathcal{A}y^{m-1}-\lambda y^{[m-1]}\geq&0\\ y\geq& 0\end{aligned}\end{equation}
and hence \begin{align} \sum_{i=1}^ny_i(\mathcal{A}y^{m-1}-\lambda y^{[m-1]})_i=&0\label{eq:35}\\
(\mathcal{A}y^{m-1}-\lambda y^{[m-1]})_i\geq&0,\ \mbox{ for } i=1,2,\ldots,n\label{eq:36}\\
y_i\geq& 0,\ \mbox{ for } i=1,2,\ldots,n.\label{eq:37}
\end{align}
Combining the equation (\ref{eq:35}) with (\ref{eq:36}) and (\ref{eq:37}), we have
\begin{equation}\label{eq:38}y_i(\mathcal{A}y^{m-1}-\lambda y^{[m-1]})_i=0,\ \mbox{ for all } i\in \{1,2,\ldots,n\}.\end{equation}
 Take $N=\{i\in \{1,2,\ldots,n\}; y_i>0\}$. Let the vector $w\in\mathbb{R}^{|N|}$ be defined by $$w_i=y_i\mbox{ for all } i\in N.$$
Clearly, $w\in\mathbb{R}^{|N|}_{++}$.  Combining the equation (\ref{eq:38}) with the fact that $y_i>0$ for all $i\in N$, we have
 $$(\mathcal{A}y^{m-1}-\lambda y^{[m-1]})_i=0,\ \mbox{ for all } i\in N,$$ and so
 $$\mathcal{A}^{N}w^{m-1}=\lambda w^{[m-1]},\  \  w\in\mathbb{R}^{|N|}_{++}.$$
 It follows from the equation (\ref{eq:36}) and the fact that $y_i=0$ for all $i\in \{1,2,\cdots,n\}\setminus N$ that $$(\mathcal{A}y^{m-1})_i\geq0,\ \mbox{ for all } i\in \{1,2,\cdots,n\}\setminus N.$$
 By the definition (\ref{eq:12}) of $\mathcal{A}y^{m-1}$, the conclusion (\ref{eq:32}) holds.

 Now we show the sufficiency. Suppose that there exists a nonempty subset $N\subseteq \{1, 2,  \cdots, n\}$ and a vector $w\in\mathbb{R}^{|N|}$
 satisfying (\ref{eq:31}) and (\ref{eq:32}). Then the vector $y$ defined by (\ref{eq:33}) is a non-zero vector in $\mathbb{R}^{|N|}_+$ such that $(\lambda,y)$ satisfying (\ref{eq:34}). The desired conclusion follows.
 \end{proof}

 Using the same proof techniques as that of Theorem \ref{th:31} with appropriate changes in the
inequalities or equalities ($y^{[m-1]}$ is replaced by $(y^Ty)^{\frac{m-2}2} y$ and so on). We can obtain the following conclusions about the Pareto $Z$-eigenvalue of $\mathcal{A}$.

 \begin{thm} \label{th:32} Let $\mathcal{A}$ be a  $m$-order and $n$-dimensional  tensor.
A real number $\mu$ is  Pareto $Z$-eigenvalue of  $\mathcal{A}$ if and only if there exists a nonempty subset $N\subseteq \{1, 2,  \cdots, n\}$ and a vector $w\in\mathbb{R}^{|N|}$
 such that
\begin{align}
 \mathcal{A}^{N}w^{m-1}&=\mu (w^Tw)^{\frac{m-2}2} w\label{eq:39},\  \  w\in\mathbb{R}^{|N|}_{++}\\
 \sum\limits_{i_2,\cdots ,i_m\in N}a_{ii_2\cdots i_m}w_{i_2}w_{i_3}\cdots w_{i_m}&\geq0\mbox{ for }i\in \{1,2,\cdots,n\}\setminus N\label{eq:310}
\end{align}
In such a case, the vector $y\in\mathbb{R}^{|N|}_+$ defined by
\begin{equation}\label{eq:311} y_i=\begin{cases} w_i, \ \  i\in N\\
0,\ \ i\in \{1,2,\cdots,n\}\setminus N\end{cases}\end{equation}
is a Pareto $Z$-eigenvector of $\mathcal{A}$ associated to the real number $\mu$.
 \end{thm}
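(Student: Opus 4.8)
The plan is to mirror the proof of Theorem \ref{th:31} line by line, replacing the factor $y^{[m-1]}$ throughout by $(y^Ty)^{\frac{m-2}2}y$, exactly as the remark preceding the statement suggests. The only genuinely new point is to check that the \emph{global} normalization factor $(y^Ty)^{\frac{m-2}2}$ survives the passage to a principal sub-tensor unchanged; everything else is a verbatim transcription.

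For the necessity, I would start from the defining system (\ref{eq:111}) of a Pareto $Z$-eigenpair $(\mu,y)$. Using the identity $\mathcal{A}y^m=y^T\mathcal{A}y^{m-1}$, which follows directly from (\ref{eq:11}) and (\ref{eq:12}), together with the factorization $(y^Ty)^{\frac m2}=(y^Ty)^{\frac{m-2}2}(y^Ty)$, the first equation of (\ref{eq:111}) rewrites as the orthogonality relation $y^T(\mathcal{A}y^{m-1}-\mu(y^Ty)^{\frac{m-2}2}y)=0$, while the other two lines give $\mathcal{A}y^{m-1}-\mu(y^Ty)^{\frac{m-2}2}y\geq0$ and $y\geq0$. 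This is precisely the $Z$-analogue of (\ref{eq:34}). Since each summand $y_i(\mathcal{A}y^{m-1}-\mu(y^Ty)^{\frac{m-2}2}y)_i$ is nonnegative and the total sum is zero, every summand vanishes, yielding the componentwise complementarity $y_i(\mathcal{A}y^{m-1}-\mu(y^Ty)^{\frac{m-2}2}y)_i=0$ for all $i$, the analogue of (\ref{eq:38}).

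I would then set $N=\{i;y_i>0\}$ and let $w\in\mathbb{R}^{|N|}_{++}$ be the restriction of $y$ to $N$. For $i\in N$ the complementarity forces the $i$th component of $\mathcal{A}y^{m-1}-\mu(y^Ty)^{\frac{m-2}2}y$ to vanish. Here lies the one place where the $Z$-case differs from the $H$-case: because $y_j=0$ for $j\notin N$, the defining sum (\ref{eq:12}) for $(\mathcal{A}y^{m-1})_i$ with $i\in N$ ranges only over indices in $N$, so it equals $(\mathcal{A}^Nw^{m-1})_i$; and, crucially, $y^Ty=w^Tw$ since the omitted components are zero, so the coupling factor satisfies $(y^Ty)^{\frac{m-2}2}=(w^Tw)^{\frac{m-2}2}$. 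These two identities together give (\ref{eq:39}). For $i\notin N$, the inequality combined with $y_i=0$ gives $(\mathcal{A}y^{m-1})_i\geq0$, which upon expanding (\ref{eq:12}) and discarding the vanishing terms is exactly (\ref{eq:310}).

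For the sufficiency, given $N$ and $w$ satisfying (\ref{eq:39}) and (\ref{eq:310}), I would define $y$ by (\ref{eq:311}) and verify that $(\mu,y)$ solves the rewritten complementarity system, hence (\ref{eq:111}): the residual $\mathcal{A}y^{m-1}-\mu(y^Ty)^{\frac{m-2}2}y$ vanishes on $N$ by (\ref{eq:39}) (again using $w^Tw=y^Ty$) and is nonnegative off $N$ by (\ref{eq:310}), so both the orthogonality and the nonnegativity follow at once, while $y\geq0$ is immediate. The main obstacle is entirely the bookkeeping identity $w^Tw=y^Ty$ that keeps the Euclidean normalization intact under restriction to $N$; once it is recorded, the argument is identical to that of Theorem \ref{th:31}.
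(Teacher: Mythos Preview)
Your proposal is correct and follows exactly the approach the paper itself takes: the paper does not give an independent proof of Theorem~\ref{th:32} but simply states that the argument of Theorem~\ref{th:31} goes through verbatim once $y^{[m-1]}$ is replaced by $(y^Ty)^{\frac{m-2}{2}}y$. Your write-up is in fact more explicit than the paper's, since you isolate the one point requiring care---the identity $w^Tw=y^Ty$ that transfers the Euclidean normalization to the principal sub-tensor---which the paper leaves implicit.
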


Following Theroem \ref{th:31} and \ref{th:32}, the following results are obvious.

 \begin{cor} \label{co:33} Let $\mathcal{A}$ be a $m$-order and $n$-dimensional  tensor.
If a real number $\lambda$ is  Pareto $H$-eigenvalue ($Z$-eigenvalue) of  $\mathcal{A}$, then $\lambda$ is $H^{++}$-eigenvalue ($Z^{++}$-eigenvalue, respectively) of some $|N|$-dimensional  principal sub-tensor of $\mathcal{A}$.
 \end{cor}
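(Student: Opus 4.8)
The plan is to read off the corollary directly from Theorems \ref{th:31} and \ref{th:32}, since it is essentially a matter of recognizing that the eigenvalue equations produced there are exactly the defining relations for $H^{++}$- and $Z^{++}$-eigenvalues of the principal sub-tensor $\mathcal{A}^N$.

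First, for the $H$-eigenvalue statement, I would suppose $\lambda$ is a Pareto $H$-eigenvalue of $\mathcal{A}$ and invoke Theorem \ref{th:31} to obtain a nonempty $N\subseteq\{1,\ldots,n\}$ and a vector $w\in\mathbb{R}^{|N|}_{++}$ satisfying (\ref{eq:31}), that is, $\mathcal{A}^{N}w^{m-1}=\lambda w^{[m-1]}$. Comparing with the defining relation (\ref{eq:22}) of an eigenvalue, this says precisely that $\lambda$ is an $H$-eigenvalue of the $|N|$-dimensional tensor $\mathcal{A}^{N}$, with $H$-eigenvector $w$. Since $\mathcal{A}^{N}$ is, by construction, an $|N|$-dimensional principal sub-tensor of $\mathcal{A}$ and since $w\in\mathbb{R}^{|N|}_{++}$ is strictly positive, the definition of $H^{++}$-eigenvalue gives that $\lambda$ is an $H^{++}$-eigenvalue of $\mathcal{A}^{N}$. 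Note that the companion inequality (\ref{eq:32}) plays no role here; it only constrains how $\mathcal{A}$ behaves on the complementary indices and is not needed for the conclusion about $\mathcal{A}^{N}$.

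The $Z$-eigenvalue case is identical in structure. Assuming $\mu$ is a Pareto $Z$-eigenvalue, I would apply Theorem \ref{th:32} to extract $N$ and $w\in\mathbb{R}^{|N|}_{++}$ satisfying (\ref{eq:39}), namely $\mathcal{A}^{N}w^{m-1}=\mu(w^Tw)^{\frac{m-2}2}w$. This matches the $E$-eigenvalue relation (\ref{eq:23}); since $w$ is real (indeed strictly positive), $\mu$ is a $Z$-eigenvalue of $\mathcal{A}^{N}$ with a strictly positive $Z$-eigenvector, which is exactly a $Z^{++}$-eigenvalue of the principal sub-tensor $\mathcal{A}^{N}$. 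Again (\ref{eq:310}) is not used.

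There is no genuine difficulty in this argument: the corollary is a definitional repackaging of the two preceding theorems. The only point requiring a little care is bookkeeping---one must check that the object $\mathcal{A}^{N}$ appearing in (\ref{eq:31}) and (\ref{eq:39}) is literally the principal sub-tensor of $\mathcal{A}$ indexed by $N$, as defined at the start of this section, and that the strict positivity $w\in\mathbb{R}^{|N|}_{++}$ is precisely what upgrades an $H$-eigenvalue ($Z$-eigenvalue) to an $H^{++}$-eigenvalue ($Z^{++}$-eigenvalue).
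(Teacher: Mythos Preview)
Your proposal is correct and matches the paper's approach exactly: the authors simply state that the corollary follows obviously from Theorems \ref{th:31} and \ref{th:32}, and your argument spells out precisely why. The only thing to add is that the paper gives no proof beyond that remark, so your level of detail is already more than what appears there.
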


 Since the definition of $H^{+}$-eigenvalue ($Z^{+}$-eigenvalue) $\lambda$ of $\mathcal{A}$ means that $\mathcal{A}x^{m-1}-\lambda x^{[m-1]}=0$ ($\mathcal{A}x^{m-1}-\lambda (x^Tx)^{\frac{m}2-1} x=0$, respectively) for some non-zero vector $x\geq0$, the following conclusions are trivial.

 \begin{prop} \label{pr:34} Let $\mathcal{A}$ be a $m$-order and $n$-dimensional  tensor. Then
 \begin{itemize}
\item[(i)] each $H^{+}$-eigenvalue ($Z^{+}$-eigenvalue) of $\mathcal{A}$ is its Pareto $H$-eigenvalue ($Z$-eigenvalue, respectively);
\item[(ii)]the Pareto $H$-eigenvalues ($Z$-eigenvalues) of a diagonal tensor $\mathcal{A}$ coincide with its diagonal
entries. In particular, a $n$-dimensional and diagonal tensor  may have at most
$n$ distinct Pareto $H$-eigenvalues ($Z$-eigenvalues).
\end{itemize}
 \end{prop}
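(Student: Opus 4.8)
The plan is to dispatch (i) directly from the defining complementarity systems and to obtain (ii) by combining an easy ``diagonal entries are eigenvalues'' direction with a converse read off from Theorems \ref{th:31} and \ref{th:32}. For (i), suppose $\lambda$ is an $H^{+}$-eigenvalue, so that $\mathcal{A}x^{m-1}-\lambda x^{[m-1]}=0$ for some nonzero $x\geq 0$. This equality is in particular $\geq 0$, which is the second line of (\ref{eq:19}); contracting it with $x^{T}$ gives $\mathcal{A}x^{m}=\lambda x^{T}x^{[m-1]}$, the first line; and $x\geq 0$ is the third. Hence $(\lambda,x)$ solves (\ref{eq:19}) and $\lambda$ is a Pareto $H$-eigenvalue. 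The $Z^{+}$ case is identical once $x^{[m-1]}$ is replaced by $(x^{T}x)^{\frac{m}{2}-1}x$ and (\ref{eq:111}) is used in place of (\ref{eq:19}).

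For (ii), write $d_{i}=a_{ii\cdots i}$ for the diagonal entries. To see each $d_{i}$ is a Pareto eigenvalue, I would apply Theorem \ref{th:31} (resp.\ Theorem \ref{th:32}) with $N=\{i\}$ and $w=1$: then $\mathcal{A}^{N}w^{m-1}=d_{i}=\lambda w^{[m-1]}$ forces $\lambda=d_{i}$ (resp.\ $\mu=d_{i}$ from $\mathcal{A}^{N}w^{m-1}=d_{i}=\mu(w^{T}w)^{\frac{m-2}{2}}w$), while for every $j\notin N$ the quantity in (\ref{eq:32}) (resp.\ (\ref{eq:310})) equals $a_{jii\cdots i}=0\geq 0$ because $\mathcal{A}$ is diagonal. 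Thus each $d_{i}$ arises both as a Pareto $H$- and as a Pareto $Z$-eigenvalue.

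For the converse in the $H$-case, let $\lambda$ be any Pareto $H$-eigenvalue. Theorem \ref{th:31} gives a nonempty $N$ and $w\in\mathbb{R}^{|N|}_{++}$ with $\mathcal{A}^{N}w^{m-1}=\lambda w^{[m-1]}$. Since a principal sub-tensor of a diagonal tensor is again diagonal, this reads $d_{i}w_{i}^{m-1}=\lambda w_{i}^{m-1}$ for each $i\in N$, and cancelling $w_{i}^{m-1}>0$ yields $\lambda=d_{i}$. Hence every Pareto $H$-eigenvalue is a diagonal entry, so together with the previous step the set of Pareto $H$-eigenvalues is exactly $\{d_{1},\dots,d_{n}\}$, a set of at most $n$ elements.

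The step I expect to be the real obstacle is the converse for the $Z$-eigenvalues. Running the same reduction through Theorem \ref{th:32} produces, for each $i\in N$, the relation $d_{i}w_{i}^{m-1}=\mu(w^{T}w)^{\frac{m-2}{2}}w_{i}$, i.e.\ $d_{i}w_{i}^{m-2}=\mu(w^{T}w)^{\frac{m-2}{2}}$. Now the right-hand side is a single constant independent of $i$, so cancellation only forces the products $d_{i}w_{i}^{m-2}$ to agree over $i\in N$; it does not pin $\mu$ to a diagonal entry once $|N|\geq 2$ and $m>2$, precisely because the normalizing factor $(w^{T}w)^{\frac{m-2}{2}}$ couples all the active components. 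For $|N|=1$ (or $m=2$) that factor collapses and one recovers $\mu=d_{i}$, but in general this coupling is exactly what the argument would have to overcome; for instance the order-$3$ diagonal tensor with $d_{1}=d_{2}=1$ admits $w=(1,1)/\sqrt{2}$, producing a Pareto $Z$-eigenvalue $\mu=1/\sqrt{2}$ that is not a diagonal entry. I would therefore expect the $Z$-part of (ii), as stated, to hold only for $m=2$, and would concentrate the proof's scrutiny on that point.
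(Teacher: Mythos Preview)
The paper offers no proof of Proposition~\ref{pr:34}; it merely remarks that the conclusions are ``trivial'' in view of the definitions. Your treatment of part~(i) and of the $H$-eigenvalue half of part~(ii) is exactly the argument the paper has in mind, and it is correct.

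More importantly, your analysis of the $Z$-eigenvalue half of~(ii) is not a gap in \emph{your} proof but a gap in the \emph{statement}: the claim is false for $m>2$. Your counterexample is valid. For the diagonal tensor of order $3$ and dimension $2$ with $d_{1}=d_{2}=1$, the vector $w=(1/\sqrt{2},1/\sqrt{2})^{T}$ satisfies $w^{T}w=1$ and $\mathcal{A}w^{2}=(\tfrac12,\tfrac12)^{T}=\tfrac{1}{\sqrt{2}}\,w$, so $\mu=1/\sqrt{2}$ solves the system~(\ref{eq:111}) with equality throughout; hence $\mu=1/\sqrt{2}$ is a Pareto $Z$-eigenvalue that is not a diagonal entry. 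More generally, for a diagonal tensor with all $d_i$ equal to $d\neq 0$, the vector $w=n^{-1/2}(1,\dots,1)^{T}$ yields the Pareto $Z$-eigenvalue $\mu=d\,n^{-(m-2)/2}$, which differs from $d$ whenever $m>2$ and $n\ge 2$. The mechanism is precisely the one you isolated: in the $Z$-equation the factor $(w^{T}w)^{(m-2)/2}$ couples the components, so the reduction $d_i w_i^{m-2}=\mu(w^{T}w)^{(m-2)/2}$ only forces the $d_i w_i^{m-2}$ to agree, not $\mu$ to equal a $d_i$. Thus the $Z$-part of~(ii) holds as stated only when $m=2$; for $m>2$ one can only conclude that every diagonal entry is a Pareto $Z$-eigenvalue, not the converse.
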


 It follows from the above results that  some Pareto $H$-eigenvalue ($Z-$eigenvalue) of a higher order tensor may be calculated by means of $H^{++}$-eigenvalue ($Z^{++}$-eigenvalue, respectively) of the lower dimensional tensors.

\begin{ex} \label{ex:1} Let $\mathcal{A}$ be a $4$-order and $2$-dimensional tensor. Suppose that $a_{1111}=1, a_{2222}=2$, $a_{1122}+a_{1212}+a_{1221}=-1$, $a_{2121}+a_{2112}+a_{2211}=-2$, and other $a_{i_1i_2i_3i_4}=0$. Then $$\mathcal{A}x^4=x_1^4+2x_2^4-3x_1^2x_2^2$$ $$\mathcal{A}x^3=\left(\begin{aligned}x_1^3-&x_1x_2^2\\2x_2^3-&2x_1^2x_2\end{aligned}\right)$$

 When $N=\{1, 2\}$, the principal sub-tensor $\mathcal{A}^N$  is just $\mathcal{A}$ itself. $\lambda_1=0$ is a $H^{++}$-eigenvalue of $\mathcal{A}$ with a corresponding eigenvector $x^{(1)}=(\frac{\sqrt[4]{8}}2,\frac{\sqrt[4]{8}}2)^T$, and so it follows from Theorem \ref{th:31} that $\lambda_1=0$ is a Pareto $H$-eigenvalue with Pareto $H$-eigenvector $x^{(1)}=(\frac{\sqrt[4]{8}}2,\frac{\sqrt[4]{8}}2)^T$.

  $\lambda_2=0$ is a $Z^{++}$-eigenvalue of $\mathcal{A}$ with a corresponding eigenvector $x^{(2)}=(\frac{\sqrt2}2,\frac{\sqrt2}2)^T$, and so it follows from Theorem \ref{th:32} that $\lambda_2=0$ is a Pareto $Z$-eigenvalue of $\mathcal{A}$ with Pareto $Z$-eigenvector $x^{(2)}=(\frac{\sqrt2}2,\frac{\sqrt2}2)^T$.

  When $N=\{1\}$, the $1$-dimensional principal sub-tensor $\mathcal{A}^{N}=1$. Obviously, $\lambda_3=1$ is both $H^{++}$-eigenvalue and $Z^{++}$-eigenvalue of $\mathcal{A}^{N}$ with a corresponding eigenvector $w=1$ and $a_{2111}w^3=0$,  and hence it follows from Theorem \ref{th:31} and \ref{th:32} that $\lambda_3=1$ is both Pareto $H$-eigenvalue and Pareto $Z$-eigenvalue of $\mathcal{A}$ with a corresponding eigenvector $x^{(3)}=(1,0)^T$.

   Similarly, when $N=\{2\}$, the $1$-dimensional principal sub-tensor $\mathcal{A}^{N}=2$. Clearly, $\lambda_4=2$ is both $H^{++}$-eigenvalue and $Z^{++}$-eigenvalue of $\mathcal{A}^{N}$ with a corresponding eigenvector $w=1$ and $a_{1222}w^3=0$,  and so $\lambda_4=2$ is both Pareto $H$-eigenvalue and Pareto $Z$-eigenvalue of $\mathcal{A}$ with a corresponding eigenvector $x^{(4)}=(0,1)^T$.
\end{ex}

 \begin{ex} \label{ex:2} Let $\mathcal{A}$ be a $3$-order and $2$-dimensional tensor. Suppose that $a_{111}=1, a_{222}=2$, $a_{122}=a_{212}=a_{221}=\frac13$, and $a_{112}=a_{121}=a_{211}=-\frac23$. Then $$\mathcal{A}x^3=x_1^3+x_1x_2^2-2x_1^2x_2+2x_2^3$$ $$\mathcal{A}x^2=\left(\begin{aligned}x_1^2&+\frac13x_2^2-\frac43x_1x_2\\2x_2^2&+\frac23x_1x_2-\frac23x_1^2\end{aligned}\right)$$

  When $N=\{1\}$, the $1$-dimensional principal sub-tensor $\mathcal{A}^{N}=1$. Obviously, $\lambda_1=1$ is both $H^{++}$-eigenvalue and $Z^{++}$-eigenvalue of $\mathcal{A}^{N}$ with a corresponding eigenvector $w=1$ and $a_{211}w^2=-\frac23<0$, and so $\lambda_1=1$ is neither Pareto $H$-eigenvalue nor Pareto $Z$-eigenvalue of $\mathcal{A}$.

When $N=\{2\}$, the $1$-dimensional principal sub-tensor $\mathcal{A}^{N}=2$. Clearly, $\lambda_2=2$ is both $H^{++}$-eigenvalue and $Z^{++}$-eigenvalue of $\mathcal{A}^{N}$ with a corresponding eigenvector $w=1$ and $a_{122}w^2=\frac13>0$, and so $\lambda_2=2$ is both Pareto $H$-eigenvalue and Pareto $Z$-eigenvalue of $\mathcal{A}$ with a corresponding eigenvector $x^{(2)}=(0,1)^T$. But $\lambda=2$ is neither  $H^+$-eigenvalue nor $Z^+$-eigenvalue of $\mathcal{A}$.
\end{ex}

 \begin{rem}
 The Example \ref{ex:2} reveals that a Pareto $H$-eigenvalue ($Z$-eigenvalue) of a tensor $\mathcal{A}$ may not be  its $H^+$-eigenvalue ($Z^{+}$-eigenvalue) even when $\mathcal{A}$ is symmetric.
 \end{rem}
\section{\bf Constrained minimization and Pareto eigenvalue}

Let $\mathcal{A}$ be a symmetric tensor of order $m$ and dimension $n$ and $\|x\|_k=(|x_1|^k+|x_2|^k+\cdots+|x_n|^k)^{\frac1k} $ for $k\geq1$.  Denote by $e^{(i)}=(e^{(i)}_1,e^{(i)}_2,\cdots,e^{(i)}_n)^T$ the ith unit vector in $\mathbb{R}^n$, i.e.,
 $$e^{(i)}_j=\begin{cases}1 &\mbox{ if }i=j\\ 0  & \mbox{ if }i\neq j\end{cases}\mbox{ for }i,j\in\{1,2,\cdots,n\}.$$
We consider the constrained minimization problem  \begin{equation}
 \gamma(\mathcal{A})=\min\{\mathcal{A}x^m;\  x\geq0\mbox{ and }\|x\|_m=1 \},\label{eq:41} \end{equation}

\begin{thm} \label{th:41} Let $\mathcal{A}$ be a  $m$-order and $n$-dimensional  symmetric tensor. If  $$\lambda(\mathcal{A})=\min\{\lambda; \lambda \mbox{ is Pareto $H$-eigenvalue of  }\mathcal{A}\},$$ then $\gamma(\mathcal{A})=\lambda(\mathcal{A})$.
 \end{thm}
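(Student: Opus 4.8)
The plan is to prove the two inequalities $\lambda(\mathcal{A})\le\gamma(\mathcal{A})$ and $\lambda(\mathcal{A})\ge\gamma(\mathcal{A})$ by establishing two claims: (A) the value $\gamma(\mathcal{A})$ is itself a Pareto $H$-eigenvalue of $\mathcal{A}$, and (B) every Pareto $H$-eigenvalue $\lambda$ satisfies $\lambda\ge\gamma(\mathcal{A})$. Claim (A) yields $\lambda(\mathcal{A})\le\gamma(\mathcal{A})$, since the minimum over all Pareto $H$-eigenvalues is no larger than any one of them, while Claim (B) yields $\lambda(\mathcal{A})\ge\gamma(\mathcal{A})$; together they force $\gamma(\mathcal{A})=\lambda(\mathcal{A})$. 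Note first that the feasible set $\{x\ge0,\ \|x\|_m=1\}$ is compact and $\mathcal{A}x^m$ is continuous, so $\gamma(\mathcal{A})$ is attained at some $\bar x$; and once (A) and (B) hold, the existence of a smallest Pareto $H$-eigenvalue is automatic, because $\gamma(\mathcal{A})$ is then the least element of a nonempty set of Pareto $H$-eigenvalues.

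Claim (B) is the easy direction. Given a Pareto $H$-eigenvalue $\lambda$ with eigenvector $x$, I observe that the system (\ref{eq:19}) is positively homogeneous in $x$: replacing $x$ by $tx$ with $t>0$ scales the equality and the inequality by $t^m$ and $t^{m-1}$ respectively and preserves $x\ge0$. Since $x\ge0$ is nonzero, $\sum_i x_i^m>0$, so I may normalize to $\|x\|_m=1$, i.e. $x^Tx^{[m-1]}=1$. Then the first line of (\ref{eq:19}) gives $\mathcal{A}x^m=\lambda$, and $x$ is feasible for (\ref{eq:41}); hence $\lambda=\mathcal{A}x^m\ge\gamma(\mathcal{A})$.

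The substance is Claim (A), for which I take a minimizer $\bar x$ and set $N=\{i:\bar x_i>0\}\ne\emptyset$. The key step is a first-order variational analysis at $\bar x$, splitting the coordinates according to $N$. For the indices in $N$ I restrict the objective to the smooth manifold $\{x\in\mathbb{R}^N:\sum_{i\in N}x_i^m=1\}$, on which the constraints $x_i\ge0$ are inactive near $\bar x$ and whose constraint gradient $m\,\bar x^{[m-1]}$ is nonzero; the Lagrange multiplier rule, together with $\nabla(\mathcal{A}x^m)=m\,\mathcal{A}x^{m-1}$ and the Euler identity $\mathcal{A}\bar x^m=\sum_i\bar x_i(\mathcal{A}\bar x^{m-1})_i$, gives $(\mathcal{A}\bar x^{m-1})_i=\lambda\,\bar x_i^{m-1}$ for $i\in N$ with $\lambda=\mathcal{A}\bar x^m=\gamma(\mathcal{A})$. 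For an index $i\notin N$ (so $\bar x_i=0$) I use the one-sided feasible curve $x(t)=(\bar x+t e^{(i)})/(1+t^m)^{1/m}$, $t\ge0$, which remains in the feasible set and satisfies $\mathcal{A}x(t)^m=\mathcal{A}\bar x^m+mt(\mathcal{A}\bar x^{m-1})_i+O(t^2)$ (the normalizing denominator contributes only at order $t^m$, which is negligible since $m\ge2$); minimality forces the right derivative to be nonnegative, i.e. $(\mathcal{A}\bar x^{m-1})_i\ge0$. Collecting both cases, and using $\lambda\,\bar x_i^{m-1}=0$ for $i\notin N$, I obtain $\mathcal{A}\bar x^{m-1}-\lambda\,\bar x^{[m-1]}\ge0$ together with $\mathcal{A}\bar x^m=\lambda\,\bar x^T\bar x^{[m-1]}$ and $\bar x\ge0$, which is exactly the system (\ref{eq:19}); thus $\gamma(\mathcal{A})$ is a Pareto $H$-eigenvalue with eigenvector $\bar x$ (equivalently, one may invoke Theorem \ref{th:31} with this $N$ and $w$ equal to the restriction of $\bar x$ to $N$).

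I expect the main obstacle to be the rigorous justification of the first-order conditions at $\bar x$, in particular the boundary coordinates $i\notin N$: the Lagrangian stationarity sketched in the Introduction presumes a differentiability and constraint qualification that can fail where coordinates vanish, so the clean device is the explicit one-sided curve $x(t)$ above, whose normalization by $(1+t^m)^{1/m}$ keeps it feasible while rendering the $t^m$ correction harmless for $m\ge2$. Everything else reduces to homogeneity bookkeeping and the Euler identity.
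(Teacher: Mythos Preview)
Your proposal is correct and follows the same two-step architecture as the paper: show that every Pareto $H$-eigenvalue is at least $\gamma(\mathcal{A})$ (your Claim~(B), the paper's first half), and show that $\gamma(\mathcal{A})$ is itself a Pareto $H$-eigenvalue (your Claim~(A), the paper's second half).

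The only noteworthy difference is in the execution of Claim~(A). You split the coordinates into the active set $N=\{i:\bar x_i>0\}$ and its complement, invoking Lagrange multipliers on the manifold $\{\sum_{i\in N}x_i^m=1\}$ to obtain the \emph{equalities} $(\mathcal{A}\bar x^{m-1})_i=\lambda\bar x_i^{m-1}$ for $i\in N$, and a normalized feasible curve $x(t)=(\bar x+te^{(i)})/(1+t^m)^{1/m}$ to obtain the \emph{inequalities} for $i\notin N$. The paper instead introduces the unconstrained auxiliary function $g(x)=\mathcal{A}x^m-\gamma(\mathcal{A})\,x^Tx^{[m-1]}$, observes that $g\ge0$ on all of $\mathbb{R}^n_+$ with $g(\bar x)=0$, and then for \emph{every} index $i$ (no case split, no renormalization) takes the one-sided derivative of $t\mapsto g(\bar x+te^{(i)})$ at $t=0$ to get $(\mathcal{A}\bar x^{m-1}-\gamma(\mathcal{A})\bar x^{[m-1]})_i\ge0$ directly. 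Since the system~(\ref{eq:19}) only requires the componentwise inequality together with the scalar identity $\mathcal{A}\bar x^m=\gamma(\mathcal{A})\bar x^T\bar x^{[m-1]}$, the paper never needs the stronger equality on $N$ that your Lagrange step supplies. Your route yields a bit more information (and dovetails with Theorem~\ref{th:31}); the paper's device is shorter and avoids any appeal to constraint qualifications or manifold calculus.
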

 \begin{proof}
 Let $\lambda$ be a Pareto $H$-eigenvalue of  $\mathcal{A}$.  Then there exists a non-zero vector $y\in\mathbb{R}^n$ such that
 $$ \mathcal{A}y^m=\lambda y^Ty^{[m-1]},\ y\geq0,$$
 and so
 \begin{equation} \mathcal{A}y^m=\lambda \sum_{i=1}^ny_i^m=\lambda \|y\|_m^m\mbox{ and }\|y\|_m>0.\label{eq:42}\end{equation}
 Then we have $$\lambda=\mathcal{A}(\frac{y}{\|y\|_m})^m\mbox{ and } \|\frac{y}{\|y\|_m}\|_m=1.$$
 From (\ref{eq:41}), it follows that $\gamma(\mathcal{A})  \leq\lambda.$ Since $\lambda$ is arbitrary,
 we have $$\gamma(\mathcal{A})\leq\lambda(\mathcal{A}).$$

 Now we show $\gamma(\mathcal{A})\geq\lambda(\mathcal{A}).$ Let $S=\{x\in\mathbb{R}^n; x\geq0\mbox{ and }\|x\|_m=1\}.$  It follows from the continuity of the homogeneous polynomial $\mathcal{A}x^m$ and the compactness of the set $S$ that there exists a $v\in S$ such that
  \begin{equation}\gamma(\mathcal{A})=\mathcal{A}v^m,\ v\geq0,\ \|v\|_m=1.\label{eq:43}\end{equation}
 Let $g(x)=\mathcal{A}x^m-\gamma(\mathcal{A})x^Tx^{[m-1]}$ for all $x\in\mathbb{R}^n$. We claim that for all $x\geq0,$ $g(x)\geq0.$ Suppose not, then there exists non-zero vector $y\geq0$ such that $$g(y)=\mathcal{A}y^m-\gamma(\mathcal{A})\sum_{i=1}^ny_i^m<0,$$ and hence $\gamma(\mathcal{A})\leq\mathcal{A}(\frac{y}{\|y\|_m})^m<\gamma(\mathcal{A}), $ a contradiction.  Thus we have
 \begin{equation} g(x)=\mathcal{A}x^m-\gamma(\mathcal{A})x^Tx^{[m-1]}\geq0\mbox{ for all }x\in \mathbb{R}^n_+.\label{eq:44}\end{equation}

For each $i\in\{1,2,\cdots,n\}$,  we define a one-variable function $$f(t)=g(v+t e^{(i)})\mbox{ for all }t\in\mathbb{R}^1.$$  Clearly, $f(t)$ is continuous and $v+t e^{(i)}\in \mathbb{R}^n_+$  for all $t\geq 0.$  It follows from (\ref{eq:43}) and  (\ref{eq:44}) that $$f(0)=g(v)=0 \mbox{ and } f(t)\geq0\mbox{ for all }t\geq0.$$
 From the necessary conditions of  extremum of one-variable function, it follows that the right-hand derivative $f'_+(0)\geq0$, and hence
 $$\begin{aligned}f'_+(0)=(e^{(i)})^T\nabla g(v) =&m(e^{(i)})^T(\mathcal{A}v^{m-1}-\gamma(\mathcal{A}) v^{[m-1]})\\
 =&m(\mathcal{A}v^{m-1}-\gamma(\mathcal{A}) v^{[m-1]})_i\geq0.\end{aligned}$$  So we have $$(\mathcal{A}v^{m-1}-\gamma(\mathcal{A}) v^{[m-1]})_i\geq0, \mbox{ for } i\in\{1,2,\cdots,n\}.$$
 Therefore, we obtain
 \begin{align} f(0)=g(v)=\mathcal{A}v^m-\gamma(\mathcal{A}) v^Tv^{[m-1]}=&0\label{eq:45} \\
\mathcal{A}v^{m-1}-\gamma(\mathcal{A}) v^{[m-1]}\geq&0\label{eq:46}\\v\geq&0\nonumber
\end{align}
 Namely,  $\gamma(\mathcal{A})$ is a Pareto $H$-eigenvalue of  $\mathcal{A}$, and hence $\gamma(\mathcal{A})\geq\lambda(\mathcal{A}),$  as required. \end{proof}

 It follows from the proof of the inquality $\gamma(\mathcal{A})\geq\lambda(\mathcal{A})$ in Theorem \ref{th:41} that $\gamma(\mathcal{A})$ is a Pareto $H$-eigenvalue of  $\mathcal{A}$, which implies the existence of Pareto $H$-eigenvalue of a symmetric tensor $\mathcal{A}$.

 \begin{thm} \label{th:42} If a $m$-order and $n$-dimensional tensor $\mathcal{A}$ is symmetric,  then $\mathcal{A}$ has at least one Pareto $H$-eigenvalue $\gamma(\mathcal{A})=\min\limits_{ x\geq0 \atop \|x\|_m=1 }\mathcal{A}x^m$.
 \end{thm}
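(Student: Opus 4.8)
The plan is to read off this existence result directly from the work already carried out in Theorem~\ref{th:41}, since $\gamma(\mathcal{A})$ is by definition the infimum of the continuous function $\mathcal{A}x^m$ over the set $S=\{x\in\mathbb{R}^n; x\geq0,\ \|x\|_m=1\}$. The first thing I would check is that this infimum is actually attained. As $S$ is closed and bounded, hence compact, and $x\mapsto\mathcal{A}x^m$ is continuous, the extreme value theorem furnishes a minimizer $v\in S$ with $\mathcal{A}v^m=\gamma(\mathcal{A})$, $v\geq0$, and $\|v\|_m=1$; this is precisely equation~(\ref{eq:43}). In particular $v\neq0$, which is what the definition of a Pareto $H$-eigenvector requires.

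Next I would argue that this minimizer $v$ is in fact a Pareto $H$-eigenvector associated with $\gamma(\mathcal{A})$, which is exactly the content of the second half of the proof of Theorem~\ref{th:41}. Setting $g(x)=\mathcal{A}x^m-\gamma(\mathcal{A})x^Tx^{[m-1]}$, the minimality of $\gamma(\mathcal{A})$ on $S$ forces $g(x)\geq0$ for every $x\in\mathbb{R}^n_+$, while $g(v)=0$. Perturbing along each coordinate direction through $f(t)=g(v+te^{(i)})$ and using that $t=0$ is a one-sided minimum on $[0,\infty)$ yields $f'_+(0)\geq0$, which unwinds to the componentwise inequalities $(\mathcal{A}v^{m-1}-\gamma(\mathcal{A})v^{[m-1]})_i\geq0$ for all $i$. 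Together with $g(v)=0$ and $v\geq0$, these are exactly the defining conditions~(\ref{eq:19}) of a Pareto $H$-eigenpair, so $\gamma(\mathcal{A})$ is a Pareto $H$-eigenvalue of $\mathcal{A}$.

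Since a Pareto $H$-eigenvalue thereby exists, the conclusion $\gamma(\mathcal{A})=\min_{x\geq0,\ \|x\|_m=1}\mathcal{A}x^m$ follows immediately from the choice of $v$, and $\mathcal{A}$ has at least one such eigenvalue. The only genuine labor is the verification in the previous paragraph; the rest is bookkeeping already embedded in Theorem~\ref{th:41}, so here one may simply extract it as a remark. The step I expect to require the most care is the passage from the one-sided derivative condition $f'_+(0)\geq0$ to the vector inequality, since it rests on correctly computing $\nabla(\mathcal{A}x^m)=m\,\mathcal{A}x^{m-1}$ and $\nabla(x^Tx^{[m-1]})=m\,x^{[m-1]}$ using the symmetry of $\mathcal{A}$; but because that computation is performed explicitly inside Theorem~\ref{th:41}, in the present proof it can simply be cited rather than repeated.
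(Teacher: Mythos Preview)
Your proposal is correct and follows essentially the same approach as the paper: the paper also derives Theorem~\ref{th:42} directly as a remark from the second half of the proof of Theorem~\ref{th:41}, noting that the argument establishing $\gamma(\mathcal{A})\geq\lambda(\mathcal{A})$ already shows $\gamma(\mathcal{A})$ itself is a Pareto $H$-eigenvalue. Your sketch of the compactness/minimizer and one-sided derivative argument is exactly that portion of the proof, so nothing further is needed.
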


Since $(x^Tx)^{\frac{m}2}=\|x\|_2^m$, using the same proof techniques as that of Theorem \ref{th:41} with appropriate changes in the
inequalities or equalities ($x^Tx^{[m-1]}$ and $y^{[m-1]}$ are respectively replaced by $(x^Tx)^{\frac{m}2}$ and $(y^Ty)^{\frac{m-2}2} y$). We can obtain the following conclusions about the Pareto $Z$-eigenvalue of  a symmetric tensor $\mathcal{A}$.

\begin{thm} \label{th:43} Let $\mathcal{A}$ be a  $m$-order and $n$-dimensional  symmetric tensor. Then $\mathcal{A}$ has at least one Pareto $Z$-eigenvalue $\mu(\mathcal{A})=\min\limits_{x\geq0 \atop \|x\|_2=1 }\mathcal{A}x^m$. What's more,
\begin{equation}\label{eq:47}\mu(\mathcal{A})=\min\{\mu; \mu \mbox{ is Pareto $Z$-eigenvalue of  }\mathcal{A}\}.\end{equation}
 \end{thm}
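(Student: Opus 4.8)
The plan is to follow verbatim the two-inequality strategy of Theorem~\ref{th:41}, replacing the normalization $x^Tx^{[m-1]}=\|x\|_m^m$ by $(x^Tx)^{\frac m2}=\|x\|_2^m$ and the vector $y^{[m-1]}$ by $(y^Ty)^{\frac{m-2}2}y$ throughout. Write $S=\{x\in\mathbb{R}^n_+;\ \|x\|_2=1\}$ and set $\mu(\mathcal{A})=\min\{\mathcal{A}x^m;\ x\in S\}$, which is well defined since $\mathcal{A}x^m$ is continuous and $S$ is compact.

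First I would establish $\mu(\mathcal{A})\le\mu$ for every Pareto $Z$-eigenvalue $\mu$. Given such $\mu$ with Pareto $Z$-eigenvector $y\ge0$, the first line of (\ref{eq:111}) reads $\mathcal{A}y^m=\mu(y^Ty)^{\frac m2}=\mu\|y\|_2^m$ with $\|y\|_2>0$; dividing by $\|y\|_2^m$ and using the degree-$m$ homogeneity of $\mathcal{A}x^m$ gives $\mu=\mathcal{A}(y/\|y\|_2)^m$ with $y/\|y\|_2\in S$, whence $\mu(\mathcal{A})\le\mu$. Taking the minimum over all Pareto $Z$-eigenvalues yields $\mu(\mathcal{A})\le\min\{\mu;\ \mu\text{ is a Pareto }Z\text{-eigenvalue of }\mathcal{A}\}$.

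The substantive direction is to show that $\mu(\mathcal{A})$ is itself a Pareto $Z$-eigenvalue, which simultaneously proves existence and the reverse inequality. Let $v\in S$ attain the minimum and put $h(x)=\mathcal{A}x^m-\mu(\mathcal{A})(x^Tx)^{\frac m2}$. A homogeneity argument identical to the one for $g$ in Theorem~\ref{th:41} (if $h(y)<0$ for some nonzero $y\ge0$ then $\mathcal{A}(y/\|y\|_2)^m<\mu(\mathcal{A})$, contradicting minimality) shows $h(x)\ge0$ for all $x\in\mathbb{R}^n_+$, while $h(v)=0$. For each index $i$ I would then set $f(t)=h(v+te^{(i)})$; since $v+te^{(i)}\ge0$ for $t\ge0$ one has $f(0)=0$ and $f(t)\ge0$ there, so $f'_+(0)\ge0$. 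Computing the gradient via $\nabla(\mathcal{A}x^m)=m\mathcal{A}x^{m-1}$ and $\nabla\bigl((x^Tx)^{\frac m2}\bigr)=m(x^Tx)^{\frac m2-1}x$ gives $f'_+(0)=m\bigl(\mathcal{A}v^{m-1}-\mu(\mathcal{A})(v^Tv)^{\frac m2-1}v\bigr)_i$, and since $\|v\|_2=1$ forces $(v^Tv)^{\frac m2-1}=1$, this is exactly the $i$-th component of $\mathcal{A}v^{m-1}-\mu(\mathcal{A})v$. Thus $\mathcal{A}v^{m-1}-\mu(\mathcal{A})(v^Tv)^{\frac m2-1}v\ge0$, which together with $\mathcal{A}v^m=\mu(\mathcal{A})(v^Tv)^{\frac m2}$ and $v\ge0$ is precisely the system (\ref{eq:111}); hence $\mu(\mathcal{A})$ is a Pareto $Z$-eigenvalue and the reverse inequality follows.

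The one point requiring a little care---the main difference from the $H$-case---is that $(x^Tx)^{\frac m2}$ need not be a polynomial when $m$ is odd, so its differentiability is not automatic. This is harmless here because the stationarity is tested at the minimizer $v$ with $\|v\|_2=1\ne0$: the map $x\mapsto(x^Tx)^{\frac m2}$ is smooth on $\mathbb{R}^n\setminus\{0\}$, and $v+te^{(i)}$ stays bounded away from the origin for $t$ near $0$, so $f$ is genuinely differentiable at $t=0$ and the gradient formula above is valid. I expect this verification of differentiability at $v$, rather than any new idea, to be the only genuine obstacle.
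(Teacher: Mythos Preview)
Your proposal is correct and mirrors the paper's approach exactly: the paper does not spell out a separate proof for Theorem~\ref{th:43} but simply instructs the reader to rerun the argument of Theorem~\ref{th:41} with $x^Tx^{[m-1]}$ and $y^{[m-1]}$ replaced by $(x^Tx)^{m/2}$ and $(y^Ty)^{(m-2)/2}y$, which is precisely what you do. Your added remark on the differentiability of $(x^Tx)^{m/2}$ near $v\neq0$ when $m$ is odd is a genuine point of care that the paper omits, and it is handled correctly.
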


 In 1952, Motzkin \cite{TSM} introduced the concept of copositive matrices, which is an important in applied mathematics and graph theory. A  real symmetric matrix $A$ is said to
be (i) {\em copositive} if $x\geq 0$ implies $x^TAx\geq0$; (ii) {\em strictly copositive} if $x\geq 0$ and $x\neq0$ implies $x^TAx>0$. Recently, Qi \cite{LQ5} extended this concept to the higher order symmetric tensors and obtained its some nice properties as ones of copositive matrices.
Let $\mathcal{A}$ be a real symmetric tensor of order $m$ and dimension $n$. $\mathcal{A}$ is said to be \begin{itemize}
\item[(i)] {\em copositive } if  $\mathcal{A}x^m\geq0$ for all $x\in \mathbb{R}^n_+$; \item[(ii)] {\em strictly copositive} if $\mathcal{A}x^m>0$ for all $x\in \mathbb{R}^n_+\setminus\{0\}$.\end{itemize}

   Let $\|\cdot\|$ denote any norm on $\mathbb{R}^n$. Obviously, we have the following equivalent definition of  (strict) copositivity of a symmetric tensor in the sense of any norm on $\mathbb{R}^n$. Also see Song and Qi \cite{SQ} for detail proof.

\begin{lem}(Song and Qi \cite{SQ}) \label{le:44} Let $\mathcal{A}$ be a symmetric tensor of order $m$ and dimension $n$. Then we have
\begin{itemize}
\item[(i)] $\mathcal{A}$ is copositive if and only if  $\mathcal{A}x^m\geq0$ for all $x\in \mathbb{R}^n_+$ with $\|x\|=1$;
\item[(ii)] $\mathcal{A}$ is strictly copositive if and only if $\mathcal{A}x^m>0$ for all $x\in \mathbb{R}^n_+$ with $\|x\|=1$;
\end{itemize}
 \end{lem}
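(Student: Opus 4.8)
The plan is to exploit the fact that $\mathcal{A}x^m$ is positively homogeneous of degree $m$, so that the value of $\mathcal{A}x^m$ on the whole positive orthant $\mathbb{R}^n_+$ is completely determined by its values on the unit ``sphere'' $\{x\in\mathbb{R}^n_+;\ \|x\|=1\}$, no matter which norm $\|\cdot\|$ is chosen. Both biconditionals then collapse to a one-line scaling argument, and the two parts differ only in whether the inequality is strict and whether the zero vector must be handled.

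For the forward (only-if) implications in (i) and (ii) there is nothing to prove: if $\mathcal{A}x^m\geq0$ holds for every $x\in\mathbb{R}^n_+$ (respectively $\mathcal{A}x^m>0$ for every nonzero such $x$), then in particular it holds for those $x$ that additionally satisfy the normalization $\|x\|=1$. So I would dispatch both directions of sufficiency immediately and concentrate on the converses.

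For the converse of (i), I would take an arbitrary $x\in\mathbb{R}^n_+$ and split into two cases. If $x=0$, then $\mathcal{A}x^m=0\geq0$ directly from the defining sum (\ref{eq:11}) (recall $m\geq2$). If $x\neq0$, then $t:=\|x\|>0$ and the vector $u:=x/t$ satisfies $u\in\mathbb{R}^n_+$, since dividing a nonnegative vector by a positive scalar preserves nonnegativity, and $\|u\|=1$. Because $\mathcal{A}(\cdot)^m$ is homogeneous of degree $m$, one has
$$\mathcal{A}x^m=\mathcal{A}(tu)^m=t^m\,\mathcal{A}u^m\geq0,$$
as $t^m>0$ and $\mathcal{A}u^m\geq0$ by hypothesis. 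Since $x$ was arbitrary in $\mathbb{R}^n_+$, the tensor $\mathcal{A}$ is copositive.

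The converse of (ii) runs identically, except that the degenerate case $x=0$ is excluded by the very definition of strict copositivity, so one normalizes only over $x\in\mathbb{R}^n_+\setminus\{0\}$; then $t^m>0$ together with $\mathcal{A}u^m>0$ yields $\mathcal{A}x^m=t^m\,\mathcal{A}u^m>0$. I do not anticipate any genuine obstacle here: the entire content of the lemma is the positive homogeneity of the polynomial $\mathcal{A}x^m$ combined with the fact that $\mathbb{R}^n_+$ is a cone, so the normalization map $x\mapsto x/\|x\|$ never leaves the orthant and the choice of norm is immaterial. The only point needing a word of care is the vector $x=0$ in part (i), which cannot be normalized and must therefore be checked directly.
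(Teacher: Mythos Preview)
Your argument is correct and complete: the positive homogeneity of $\mathcal{A}x^m$ together with the cone structure of $\mathbb{R}^n_+$ is exactly what makes the normalization work, and you handled the $x=0$ case in (i) properly. The paper itself does not spell out a proof of this lemma; it introduces it with the word ``Obviously'' and refers to Song and Qi \cite{SQ} for details, so your scaling argument is precisely the intended (and essentially only) route.
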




 As the immediate conclusions of the above consequences, it is easy to obtain the following results about the copositive  (strictly copositive) tensor.

 \begin{cor}\label{co:45} Let $\mathcal{A}$ be a $m$-order and $n$-dimensional symmetric tensor. Then \begin{itemize}
\item[(a)] $\mathcal{A}$ always has Pareto $H$-eigenvalue. $\mathcal{A}$ is copositive  (strictly copositive) if and only
if all of its Pareto $H$-eigenvalues are nonnegative (positive, respectively).
\item[(b)] $\mathcal{A}$ always has Pareto $Z$-eigenvalue. $\mathcal{A}$ is copositive  (strictly copositive) if and only
if all of its Pareto $Z$-eigenvalues are nonnegative (positive, respectively).\end{itemize}
\end{cor}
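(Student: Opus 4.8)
The plan is to deduce everything from the identification of the extremal Pareto eigenvalue with the constrained minimum, established in Theorems \ref{th:41} and \ref{th:43}, together with the norm-free characterization of (strict) copositivity in Lemma \ref{le:44}. The existence assertions in (a) and (b) are immediate and require no further argument: Theorem \ref{th:42} already produces the Pareto $H$-eigenvalue $\gamma(\mathcal{A})$, and Theorem \ref{th:43} already produces the Pareto $Z$-eigenvalue $\mu(\mathcal{A})$.

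For the copositivity equivalence in (a) I would argue as follows. By Lemma \ref{le:44}(i), applied with the norm $\|\cdot\|_m$, the tensor $\mathcal{A}$ is copositive if and only if $\mathcal{A}x^m\geq0$ for every $x\geq0$ with $\|x\|_m=1$; since $S=\{x\geq0;\ \|x\|_m=1\}$ is compact and $\mathcal{A}x^m$ is continuous, this holds precisely when $\gamma(\mathcal{A})=\min_{x\in S}\mathcal{A}x^m\geq0$. Theorem \ref{th:41} identifies this minimum with $\lambda(\mathcal{A})=\min\{\lambda;\ \lambda\mbox{ is a Pareto }H\mbox{-eigenvalue}\}$, and $\lambda(\mathcal{A})\geq0$ is by definition exactly the requirement that every Pareto $H$-eigenvalue be nonnegative. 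Chaining these equivalences settles the copositive case. For strict copositivity I would use Lemma \ref{le:44}(ii) in the same way: $\mathcal{A}x^m>0$ throughout the compact set $S$ is equivalent to $\gamma(\mathcal{A})=\min_{x\in S}\mathcal{A}x^m>0$, hence to $\lambda(\mathcal{A})>0$, hence to positivity of every Pareto $H$-eigenvalue.

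Part (b) is handled by the identical chain of equivalences, replacing $\|\cdot\|_m$ by $\|\cdot\|_2$, the set $S$ by $\{x\geq0;\ \|x\|_2=1\}$, Theorem \ref{th:41} by Theorem \ref{th:43}, and $\gamma(\mathcal{A}),\lambda(\mathcal{A})$ by $\mu(\mathcal{A})$ and the minimum Pareto $Z$-eigenvalue. Since Lemma \ref{le:44} is stated for an arbitrary norm on $\mathbb{R}^n$, it applies verbatim to $\|\cdot\|_2$, and Theorem \ref{th:43} already supplies the needed identity $\mu(\mathcal{A})=\min\{\mu;\ \mu\mbox{ is a Pareto }Z\mbox{-eigenvalue}\}$.

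I do not anticipate a genuine obstacle, since every ingredient has been prepared in the preceding results; the only point requiring a moment of care is the strict case, where one must invoke the compactness of the constraint sphere to pass from the pointwise statement ``$\mathcal{A}x^m>0$ for all admissible $x$'' to the statement ``the minimum is strictly positive,'' rather than merely nonnegative. The attainment of the minimum, already recorded in the existence theorems, is precisely what makes this step legitimate.
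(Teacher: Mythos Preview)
Your proposal is correct and is precisely the argument the paper intends: the corollary is stated without proof, merely as ``the immediate conclusions of the above consequences,'' and your chain of equivalences via Lemma~\ref{le:44} and Theorems~\ref{th:41}--\ref{th:43} is exactly the intended derivation. There is nothing to add.
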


Now we give an example for solving the constrained minimization problem for homogeneous polynomial and testing the (strict) copositivity of a symmetric tensor $\mathcal{A}$ with the help of the above results.

 \begin{ex} \label{ex:3} Let $\mathcal{A}$ be a $4$-order and $2$-dimensional tensor. Suppose that $a_{1111}=a_{2222}=1$, $a_{1112}=a_{1211}=a_{1121}=a_{2111}=t$, and other $a_{i_1i_2i_3i_4}=0$. Then $$\mathcal{A}x^4=x_1^4+x_2^4+4tx_1^3x_2$$ $$\mathcal{A}x^3=\left(\begin{aligned}x_1^3+&3tx_1^2x_2\\x_2^3+&tx_1^3\end{aligned}\right)$$

  When $N=\{1, 2\}$, the principal sub-tensor $\mathcal{A}^N$  is just $\mathcal{A}$ itself. $\lambda_1=1+\sqrt[4]{27}t$  is $H^{++}$-eigenvalue of $\mathcal{A}$ with a corresponding eigenvector $x^{(1)}=(\sqrt[4]{\frac34},\sqrt[4]{\frac14})^T$. Then it follows from Theorem \ref{th:31} and \ref{th:32} that $\lambda_1=1+\sqrt[4]{27}t$  is  Pareto $H$-eigenvalues with Pareto $H$-eigenvector $x^{(1)}=(\sqrt[4]{\frac34},\sqrt[4]{\frac14})^T$.

    When $N=\{1\}$, the $1$-dimensional principal sub-tensor $\mathcal{A}^{N}=1$. Obviously, $\lambda_2=1$ is both $H^{++}$-eigenvalue and $Z^{++}$-eigenvalue of $\mathcal{A}^{N}$ with a corresponding eigenvector $w=1$ and $a_{2111}w^3=t$.   Then when $t>0$,  it follows from Theorem \ref{th:31} and \ref{th:32} that $\lambda_2=1$ is both Pareto $H$-eigenvalue and Pareto $Z$-eigenvalue of $\mathcal{A}$ with a corresponding eigenvector $x^{(2)}=(1,0)^T$; when $t<0$, $\lambda_2=1$ is neither Pareto $H$-eigenvalue nor Pareto $Z$-eigenvalue of $\mathcal{A}$.

   Similarly, when $N=\{2\}$, the $1$-dimensional principal sub-tensor $\mathcal{A}^{N}=1$. Clearly, $\lambda_3=1$ is both $H^{++}$-eigenvalue and $Z^{++}$-eigenvalue of $\mathcal{A}^{N}$ with a corresponding eigenvector $w=1$ and $a_{1222}w^3=0$,  and so $\lambda_3=1$ is both Pareto $H$-eigenvalue and Pareto $Z$-eigenvalue of $\mathcal{A}$ with a corresponding eigenvector $x^{(3)}=(0,1)^T$.

   So  the following conclusions are easily obtained:
\begin{itemize}
\item[(i)]  Let $t<-\frac{1}{\sqrt[4]{27}}$. Then $\lambda_1=1+\sqrt[4]{27}t<0$ and  $\lambda_3=1$ are  Pareto $H$-eigenvalues of $\mathcal{A}$ with Pareto $H$-eigenvectors $x^{(1)}=(\sqrt[4]{\frac34},\sqrt[4]{\frac14})^T$ and $x^{(3)}=(0,1)^T$, respectively.  It follows from Theorem \ref{th:41} and \ref{th:42} that $$\gamma(\mathcal{A})=\min\limits_{x\geq0 \atop \|x\|_4=1}\mathcal{A}x^4=\min\{\lambda_1,\lambda_3\}=1+\sqrt[4]{27}t<0.$$ The polynomial $\mathcal{A}x^4$ attains its minimum value at $x^{(1)}=(\sqrt[4]{\frac34},\sqrt[4]{\frac14})^T$.  It follows from Corollary \ref{co:45} that  $\mathcal{A}$ is not copositive.
\item[(ii)]  Let $t=-\frac{1}{\sqrt[4]{27}}$. Then $\lambda_1=1+\sqrt[4]{27}t=0$ and  $\lambda_3=1$ are  Pareto $H$-eigenvalues of $\mathcal{A}$ with Pareto $H$-eigenvectors $x^{(1)}=(\sqrt[4]{\frac34},\sqrt[4]{\frac14})^T$ and $x^{(3)}=(0,1)^T$, respectively.  It follows from Theorem \ref{th:41} and \ref{th:42} that $$\gamma(\mathcal{A})=\min\limits_{x\geq0 \atop \|x\|_4=1}\mathcal{A}x^4=\min\{\lambda_1,\lambda_3\}=0.$$ The polynomial $\mathcal{A}x^4$ attains its minimum value at $x^{(1)}=(\sqrt[4]{\frac34},\sqrt[4]{\frac14})^T$.  It follows from Corollary \ref{co:45} that  $\mathcal{A}$ is copositive.
\item[(iii)] Let $0>t>-\frac{1}{\sqrt[4]{27}}$. Clearly, $0<1+\sqrt[4]{27}t<1$. Then $\lambda_1=1+\sqrt[4]{27}t$ and  $\lambda_3=1$ are  Pareto $H$-eigenvalues of $\mathcal{A}$.  It follows from Theorem \ref{th:41} and \ref{th:42} that $$\gamma(\mathcal{A})=\min\limits_{x\geq0 \atop \|x\|_4=1}\mathcal{A}x^4=\min\{\lambda_1,\lambda_3\}=1+\sqrt[4]{27}t>0.$$ The polynomial $\mathcal{A}x^4$ attains its minimum value at  $x^{(1)}=(\sqrt[4]{\frac34},\sqrt[4]{\frac14})^T$. It follows from Corollary \ref{co:45} that  $\mathcal{A}$ is strictly copositive.
\item[(iv)] Let $t=0$.  Then $\lambda_1=\lambda_2=\lambda_3=1$ are Pareto $H$-eigenvalues of $\mathcal{A}$  with Pareto $H$-eigenvectors $x^{(1)}=(\sqrt[4]{\frac34},\sqrt[4]{\frac14})^T$ and $x^{(2)}=(1,0)^T$ and $x^{(3)}=(0,1)^T$, respectively.  It follows from Theorem \ref{th:41} and \ref{th:42} that $$\gamma(\mathcal{A})=\min\limits_{x\geq0 \atop \|x\|_4=1}\mathcal{A}x^4=\min\{\lambda_1,\lambda_2,\lambda_3\}=1>0.$$
The polynomial $\mathcal{A}x^4$ attains its minimum value at $x^{(1)}=(\sqrt[4]{\frac34},\sqrt[4]{\frac14})^T$ or $x^{(2)}=(1,0)^T$ or $x^{(3)}=(0,1)^T$.  It follows from Corollary \ref{co:45} that  $\mathcal{A}$ is strictly copositive.
\item[(v)] Let $t>0$.  Then $\lambda_1=1+\sqrt[4]{27}t$ and  $\lambda_2=\lambda_3=1$ are  Pareto $H$-eigenvalues of $\mathcal{A}$  with Pareto $H$-eigenvectors $x^{(1)}=(\sqrt[4]{\frac34},\sqrt[4]{\frac14})^T$ and $x^{(2)}=(1,0)^T$ and $x^{(3)}=(0,1)^T$, respectively.  It follows from Theorem \ref{th:41} and \ref{th:42} that $$\gamma(\mathcal{A})=\min\limits_{x\geq0 \atop \|x\|_4=1}\mathcal{A}x^4=\min\{\lambda_1,\lambda_2,\lambda_3\}=1>0.$$
The polynomial $\mathcal{A}x^4$ attains its minimum value at $x^{(2)}=(1,0)^T$ or $x^{(3)}=(0,1)^T$.  It follows from Corollary \ref{co:45} that  $\mathcal{A}$ is strictly copositive.
 \end{itemize}
\end{ex}


\bibliographystyle{amsplain}

\end{document}